\documentclass[a4paper, 12pt, onecolumn]{article}
\usepackage{geometry}
\geometry{left=3cm,right=3cm,top=2cm,bottom=2.5cm}
\usepackage{amsmath}
\usepackage{amsthm}
\usepackage{amsfonts}
\usepackage{bbm}
\usepackage{CJK}
\usepackage{fancyhdr}
\usepackage{graphicx}
\usepackage{psfrag}
\usepackage{amsfonts,amsmath,amsthm, amssymb}
\usepackage{latexsym, euscript, epic, eepic}
\usepackage{time}
\usepackage{txfonts}
\usepackage{colortbl}
\usepackage{stmaryrd}
\usepackage{mathrsfs}
\usepackage{txfonts}
\usepackage{amsfonts}
\usepackage{color}
\usepackage{lineno}
\usepackage[square, comma, sort&compress, numbers]{natbib}

\usepackage{indentfirst,latexsym,bm}

\setlength{\parindent}{2em} \numberwithin{equation}{section}

\begin{document}
\newtheorem{theorem}{Theorem}[section]
\newtheorem{proposition}[theorem]{Proposition}
\newtheorem{remark}[theorem]{Remark}
\newtheorem{corollary}[theorem]{Corollary}
\newtheorem{definition}{Definition}[section]
\newtheorem{lemma}[theorem]{Lemma}
\newcommand{\wuhao}{\fontsize{10pt}{10pt}\selectfont}

\title{Inequalities for Zero-Balanced Gaussian hypergeometric function$^\star$  \footnotetext{$\star$ This research is
supported by the Natural Science Foundation  of P. R. China (No.11401531, No.11601485.) and the Natural Science Foundation of Zhejiang Province (No.Q17A010038).}
\footnotetext{*Corresponding author.} \footnotetext{E-mail address:
htiren@zstu.edu.cn, mxy@zstu.edu.cn,xiaohui.zhang@zstu.edu.cn.}}
\author{\small
Ti-Ren Huang*, Xiao-Yan Ma and Xiao-Hui Zhang\\
\small{(School of Science, Zhejiang Sci-Tech University, Hangzhou
310018, China)}}
\date{}
\maketitle
 \fontsize{12}{22}\selectfont\small
 \paragraph{Abstract:}In this paper, we consider the monotonicity of certain combinations of the Gaussian hypergeometric functions $F(a-1,b;a+b;1-x^c)$ and $F(a-1-\delta,b+\delta;a+b;1-x^d)$ on $(0,1)$ for $\delta\in(a-1,0)$, and study the problem of comparing these two functions, thus  get the largest value $\delta_1=\delta_1(a,c,d)$ such that  the inequality  $F(a-1,b;a+b;1-x^c)<F(a-1-\delta,b+\delta;a+b;1-x^d)$ holds for all $x\in (0,1)$.
\\[10pt]
\emph{Key Words}: Gaussian hypergeometric function, monotonicity, inequality.
\\[10pt]
\emph{Mathematics Subject Classification}(2010): 33C05, 26D20.

\section{\normalsize Introduction}\label{sec:bd}
In this paper we consider the Gaussian hypergeometric function
\begin{align}\label{hypergeometric function}
F(a,b;c;x)= _2F_1(a,b;c;x)=\sum_{n=0}^\infty\frac{(a,n)(b,n)}{(c,n)}\frac{x^n}{n!},
\end{align}
for  $x\in (-1,1)$, where $(a,n)$ denotes the shifted factorial function  $(a,n)\equiv a(a+1)\cdots (a+n-1)$, $n=1,2,\cdots$, and $(a,0)=1$ for $a\neq 0$.
It is well known that the function $F(a, b; c; x)$ has many important applications in geometric function theory, theory of mean values, and in several other contexts, and many classes of elementary functions and special functions in mathematical physics are particular or limiting cases of this function \cite{Anderson G D1, Anderson G D2,Anderson G D3,Baricz' A,Barnard R W,Borwein J M2,Neuman E,Ponnusamy S,Ponnusamy S2,Qiu S L}.

For $r\in(0,1)$ and $a\in (0,1)$, the generalized elliptic integrals of the first and second kinds are defined as
\begin{align*}
\mathscr{K}_a(r)=\frac{\pi}{2}F(a,1-a;1;r^2),\quad \mathscr{E}_a(r)=\frac{\pi}{2}F(a-1,1-a;1;r^2).
\end{align*}
In the particular case $a=1/2$, the generalized elliptic integrals reduce to the complete elliptic integrals
\begin{align*}
\mathscr{K}(r)=\frac{\pi}{2}F(\frac12,\frac12;1;r^2),\quad \mathscr{E}(r)=\frac{\pi}{2}F(-\frac12,\frac12;1;r^2).
\end{align*}

J. M. Borwein and P. B. Borwein, in order to find out the connections between the arithmetic-geometric mean value and other mean values, showed in their paper  \cite{Borwein J M2} that
\begin{align*}
F(\frac12,\frac12;1;1-x^2)<F(\frac12-\delta,\frac12+\delta;1;1-x^3),
\end{align*}
for $\delta=1/6$ and $x\in (0,1)$.

Subsequently, it was proved by Anderson et al. in \cite{Anderson G D1}  that
\begin{align}\label{inequalitis1}
F(\frac12,\frac12;1;1-x^c)<F(\frac12-\delta,\frac12+\delta;1;1-x^d)<F(\frac12,\frac12;1;1-x^d),
\end{align}
%<F(\frac12-\delta_0,\frac12+\delta_0;1;1-x^d)
for all $x\in(0,1)$, $c,d\in(0,\infty)$ with $0<4c<\pi d<\infty$ and $\delta\in (0,\delta_0)$ where $\delta_0=[(d\pi-4c)/(4\pi d)]^\frac12$.
It was conjectured  for $c=2,d=3$  that the best value of $\delta_0$ for which (\ref{inequalitis1}) is valid is
\begin{align*}
\delta_0=\frac{1}{\pi}\arccos\frac23\approx0.268.
\end{align*}

In \cite{Anderson G D3}, Anderson et al. considered the more general case of (\ref{inequalitis1}). They showed several monotonicity theorems of certain combinations of $F(a,b;a+b;1-x^c)$ and $F(a-\delta,b+\delta;a+b;1-x^d)$ on $(0,1)$ for given $a,b,c,d \in(0,\infty),a\leq b$ and $c\leq d$, and found $\sup\{\delta\in(0,a)|F(a,b;c;1-x^c)<F(a-\delta,b+\delta;a+b;1-x^d)\ \hbox{for}\  x\in(0,1)\}$. Thus the above conjecture and the following open problem raised in \cite{Anderson G D3} were answered.

\textbf{Open problem}. Is it true, for small values of $\delta$, say $0<\delta<\min\{a,b\}$, that
\begin{align*}
F(a, b; a+b; 1-x^c)<F(a-\delta, b+\delta; a+b; 1-x^d),
\end{align*}
for $x\in (0, 1), a,b,c,d\in (0, \infty)$ with $0 < c < d < \infty$?

Motivated by the results mentioned above,  the following question was naturally raised.

\textbf{Question}. What is the best value of $\delta_1=\delta(a,c,d)\in(a-1,0)$ such that
\begin{align*}
F(a-1,b;a+b;1-x^c)<F(a-1-\delta,b+\delta;a+b;1-x^d),
\end{align*}
for $x\in(0,1), a\in(0,1), b\geq1-a$ and $0<c<d<\infty$.

In \cite{Song}, Song et al. established a monotonicity theorem of certain combinations of $F(-1/2, 1/2; $ $1; 1-x^c)$ and $F(-1/2-\delta,1/2+\delta;1;1-x^d)$ on $(0,1)$ for given  $0<c\leq 5d/6$, and got the following inequality:  For $\delta_1=(\sqrt{c/d}-1)/2$ and $\delta\in (-1/2,\delta_1)$,
\begin{align}\label{inequ}
F(-\frac12,\frac12;1;1-x^c)< F(-\frac12-\delta,\frac12+\delta;1;1-x^d).
\end{align}
and $\delta_1=(\sqrt{c/d}-1)/2$ is the largest value for the inequality (\ref{inequ}) holds for all $x\in(0,1)$,
%F(-\frac12-\delta_1,\frac12+\delta_1;1;1-x^d)

 Besides, they also considered monotonicity property of certain combinations of $F(a-1-\delta, 1-a+\delta; 1; 1-x^3)$ and $F(a-1, 1-a; 1; 1-x^2)$ for given $a\in[1/29, 1)$ and $\delta\in (a-1, 0)$, and found the largest value $\delta_1$ such that inequality $F(a-1,1-a; 1; 1-x^2)<F(a-1-\delta,1-a+\delta;1;1-x^3)$ holds for all $x\in(0,1)\}$.

In this paper, we will show a monotonicity theorem of certain combinations of $F(a-1, b; a+b; 1-x^c)$ and $F(a-1-\delta,b+\delta; a+b; 1-x^d)$ on $(0,1)$,  and find the largest value $\delta_1=\delta_1(a,c,d)$ such that the inequality  $F(a-1,b;a+b;1-x^c)<F(a-1-\delta,b+\delta;a+b;1-x^d)$ holds for all $x\in (0,1)$.
  Throughout this paper, we shall always let $a\in (0,1), b\geq 1-a$, and
\begin{align*}
&\alpha=a(b+1),\quad \beta=b(1-a),\quad p=\alpha+\beta =a+b,\\
&h=\alpha\beta (p+\beta )=a(1-a)b(b+1)(a+2b-ab),\\
&k=\beta (p+1)+p=b(1-a)(a+b+1)+a+b.
\end{align*}
The main results are stated as follows.
\begin{theorem}\label{main theorem1}
Assume that $a\in (0,1), b\geq 1-a$,  and $\alpha ,\beta ,p,h$ satisfy either $\alpha \geq \sqrt3\beta $ or $\alpha <\sqrt3\beta $, and  $4h(\beta +p)\geq p^4$. Let $0< c/d \leq (\beta +p)/k$, and $\delta_1 $ be the large root of $(c/d-1)\beta+(a-b-1)\delta -\delta^2=0$, namely $\delta_1=[(a-b-1)+\sqrt{(p-1)^2+4\beta c/d}]/2<0$. We have that

(1) If $\delta\in(a-1,\delta_1]$, the function
\begin{align*}
G(x)=\frac{F(a-1-\delta,b+\delta;p;1-x^d)-F(a-1,b;p;1-x^c)}{1-x^c}
\end{align*}
is strictly decreasing from $(0,1)$ onto $(C_1(\delta),C_2(\delta))$, where
\begin{align*}
&C_1(\delta)=\frac d{pc}\left((\frac cd-1)\beta +(a-b-1)\delta-\delta^2\right)\geq 0 \\
&C_2(\delta)=\frac{1}{pB(a-\delta,b+1+\delta)}-\frac{1}{p B(a,b+1)}.
\end{align*}
In particular, for all $x\in(0,1)$ and $\delta\in (a-1,\delta_1]$,
\begin{align*}
F(a-1,b;p;1-x^c)+C_1(\delta)(1-x^c)&<F(a-1-\delta,b+\delta;p;1-x^d)\nonumber\\
&<F(a-1,b;p;1-x^c)+C_2(\delta)(1-x^c).
\end{align*}

%\begin{align}\label{inequalitis2}
%F(a-1,b;p;1-x^c)&< F(a-1-\delta,b+\delta;p;1-x^d)
%\end{align}
(2) If $\delta_1<\delta<0$, as the functions of $x$, $F(a-1-\delta,b+\delta;p;1-x^d)$ and $F(a-1,b;p;1-x^c)$ are not directly comparable on $(0,1)$, that is, neither
\begin{align*}
F(a-1,b;p;1-x^c)<F(a-1-\delta,b+\delta;p;1-x^d),
\end{align*}
nor its reversed inequality holds for all $x\in(0,1)$.
\end{theorem}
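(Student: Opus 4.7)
The plan is to prove part (1) by first computing the two boundary values of $G$, then showing strict monotonicity via an L'H\^{o}pital-type argument, and to deduce part (2) from a short sign analysis.

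For the boundary values: as $x\to 1^-$, Taylor-expanding $F(a-1,b;p;1-x^c)$ and $F(a-1-\delta,b+\delta;p;1-x^d)$ in their (small) arguments and using $1-x^s = s(1-x) + O((1-x)^2)$, the constant terms cancel in the numerator, and after dividing by $1-x^c\sim c(1-x)$ one obtains $\lim_{x\to 1^-} G(x) = [c\beta - d(1-a+\delta)(b+\delta)]/(pc)$. A direct algebraic rearrangement rewrites this as $(d/pc)[(c/d-1)\beta + (a-b-1)\delta - \delta^2]$, which is precisely $C_1(\delta)$; since $\delta_1$ is the larger root of the defining downward-opening quadratic in $\delta$, the inequality $C_1(\delta)\geq 0$ on $(a-1,\delta_1]$ follows once the smaller root is checked to lie below $a-1$ (which uses $b\geq 1-a$). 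As $x\to 0^+$, both $1-x^c$ and $1-x^d$ tend to $1$; since $p-(a-1)-b = p-(a-1-\delta)-(b+\delta) = 1 > 0$, Gauss's summation formula applies to each $F$ at argument $1$ and yields $\lim_{x\to 0^+} G(x) = 1/(pB(a-\delta,b+1+\delta)) - 1/(pB(a,b+1)) = C_2(\delta)$.

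For the monotonicity, I would apply the L'H\^{o}pital monotonicity rule. Writing $G = N/D$ with $N(x) = F(a-1-\delta,b+\delta;p;1-x^d) - F(a-1,b;p;1-x^c)$ and $D(x) = 1-x^c$, both vanish at $x=1$; the derivative identity $(d/dz)F(\alpha,\beta;\gamma;z) = (\alpha\beta/\gamma)F(\alpha+1,\beta+1;\gamma+1;z)$ gives
\begin{equation*}
H(x) := \frac{N'(x)}{D'(x)} = \frac{\beta}{p}\,F(a,b+1;p+1;1-x^c) - \frac{d\,(1-a+\delta)(b+\delta)\,x^{d-c}}{pc}\,F(a-\delta,b+1+\delta;p+1;1-x^d),
\end{equation*}
so it suffices to prove $H$ is strictly decreasing on $(0,1)$. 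As a sanity check, $H(1^-) = C_1(\delta)$ (matching the L'H\^{o}pital limit of $G$) and $H(0^+) = +\infty$, since $F(a,b+1;p+1;\cdot)$ is zero-balanced (hence divergent at $1$) while $x^{d-c}$ overpowers the logarithmic growth of the other zero-balanced factor. The main obstacle is the strict decrease of $H$: computing $H'$ produces three terms---two negative (from differentiating the leading $F$ in each piece) and one positive cross-term (from differentiating $x^{d-c}$)---whose net sign is not evident \emph{a priori}. I would either apply the L'H\^{o}pital monotonicity rule once more or, after factoring out positive powers of $x$ and $1-x$, reduce to a power-series comparison in $1-x$; the hypotheses of the theorem are calibrated for exactly this step. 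The bound $0 < c/d \leq (\beta+p)/k$ forces the correct sign at the lowest order, while the dichotomy \emph{``$\alpha\geq\sqrt{3}\beta$, or $\alpha<\sqrt{3}\beta$ together with $4h(\beta+p)\geq p^4$''} controls the next-order coefficient through a discriminant-style condition. The boundary case $\delta = \delta_1$ is recovered by continuity.

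Part (2) requires no monotonicity. The endpoint formulas for $C_1(\delta)$ and $C_2(\delta)$ hold for every $\delta\in(a-1,0)$. For $\delta\in(\delta_1,0)$, the quadratic defining $C_1(\delta)$ is strictly negative because $\delta$ lies to the right of its larger root $\delta_1$ and the parabola opens downward; thus $C_1(\delta) < 0$. Meanwhile $\phi(\delta) := 1/B(a-\delta,b+1+\delta)$ has logarithmic derivative $\psi(a-\delta)-\psi(b+1+\delta)$, which is negative throughout $(a-1,0)$ since $a-\delta < b+1+\delta$ there (using $b\geq 1-a$) and $\psi$ is strictly increasing on $(0,\infty)$; thus $\phi$ strictly decreases on $(a-1,0)$ and $\phi(\delta) > \phi(0)$, giving $C_2(\delta) > 0$. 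Continuity of $G$ on $(0,1)$ with $\lim_{x\to 0^+} G(x) = C_2(\delta) > 0 > C_1(\delta) = \lim_{x\to 1^-} G(x)$ forces $G$, and hence $F(a-1-\delta,b+\delta;p;1-x^d) - F(a-1,b;p;1-x^c)$, to vanish and change sign in $(0,1)$, ruling out both candidate inequalities.
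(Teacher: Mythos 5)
Your endpoint computations (the Taylor expansion at $x\to 1^-$ giving $C_1(\delta)$, the Gauss-summation limit at $x\to 0^+$ giving $C_2(\delta)$, and the sign analysis proving part (2) from $C_1(\delta)<0<C_2(\delta)$ when $\delta_1<\delta<0$) are correct and match what the paper does. But the core of the theorem is the strict monotonicity of $G$ in part (1), and there your proposal stops exactly where the real work begins. You reduce the problem to showing that $H=N'/D'$ is strictly decreasing, observe that $H'$ has terms of both signs ``whose net sign is not evident \emph{a priori},'' and then state that you \emph{would} either iterate the monotone L'H\^{o}pital rule or perform a power-series comparison, asserting without verification that ``the hypotheses of the theorem are calibrated for exactly this step.'' That is not a proof step; it is a restatement of the difficulty. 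The entire technical apparatus of the paper --- the monotonicity of $Q(n)$ in Lemma \ref{Q(n)}, the nonnegativity of $g(x,y)=y^2+((p+1)x-1)y+\alpha\beta x^2$ on the domain $D$ in Lemma \ref{g(x,y)} (which is precisely where the dichotomy $\alpha\geq\sqrt3\beta$ versus $\alpha<\sqrt3\beta$ with $4h(\beta+p)\geq p^4$ enters, via Lemma \ref{a_0}), the constraint $c/d\leq(\beta+p)/k$, and the range $\delta\in(a-1,\delta_1]$ through Lemma \ref{lemmaf}(3) --- exists solely to settle the sign you leave open. Without carrying out that computation, you have not used the hypotheses at all, and the claim is unproven.

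For comparison, the paper sidesteps the awkward factor $x^{d-c}$ in your $H$ by first substituting $x\mapsto(1-x)^{1/c}$, $t=1-(1-x)^{d/c}$, so that the denominator becomes $g(x)=x$ and $N'/D'$ becomes simply $f'(x)$; monotonicity then reduces to $f''(x)>0$. It then expands $(1-x)^2f''(x)$ as a power series in $t$, bounds the $F(a+1,b+2;p+2;x)$ term using Lemma \ref{lemma0}(1) together with $t>x$, and shows each series coefficient is bounded below by a multiple of $g(c/d,\,f_3(\delta))\geq 0$. If you want to salvage your version in the original variable, you would have to reproduce an equivalent of that entire chain; the ``second L'H\^{o}pital application'' you gesture at does not obviously work because $N'$ and $D'$ do not share a common zero in a useful way once the factor $x^{d-c}$ is present.
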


%\begin{theorem}\label{main theorem2}
%If $a\in (0,1)$, $0<c<d<\infty$, $\delta\in[0,1-a]$ and $x\in(0,1)$, then
%\begin{align}\label{reseved}
%F(-a-\delta,a+\delta;1;1-x^d)<F(-a,a;1;1-x^c).
%\end{align}
%\end{theorem}

The following Theorem can be directly obtained by Theorem \ref{main theorem1}.
\begin{theorem}\label{main theorem3}
Assume that $a\in (0,1), b\geq 1-a$. Let $\alpha, \beta, p, h $ be as in Theorem \ref{main theorem1},  if $0< c/d \leq (\beta +p)/k$, then
\begin{align*}
&\sup\{\delta\in(a-1,0)|F(a-1,b;a+b;1-x^c)<F(a-1-\delta,b+\delta;a+b;1-x^d),\nonumber \\
&\hbox {for all}\  x\in (0,1)\}=\frac{a-b-1+\sqrt{(p-1)^2+4\alpha c/d}}{2}.
\end{align*}
\end{theorem}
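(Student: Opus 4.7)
The plan is to deduce Theorem \ref{main theorem3} directly from Theorem \ref{main theorem1}: part (1) will give inclusion of the entire interval $(a-1,\delta_1]$ in the admissible set, while part (2) will give non-inclusion of $(\delta_1,0)$. The supremum must therefore coincide with the threshold value $\delta_1$ defined in Theorem \ref{main theorem1}.

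First I would handle the lower direction. Part (1) of Theorem \ref{main theorem1} asserts that for $\delta\in(a-1,\delta_1]$ the function $G(x)$ is strictly decreasing from $(0,1)$ onto $(C_1(\delta),C_2(\delta))$, so in particular $G(x)>C_1(\delta)$ on $(0,1)$. Because $\delta_1$ is the large root of $(c/d-1)\beta+(a-b-1)\delta-\delta^2=0$ and the leading coefficient in $\delta$ is negative, the polynomial $(c/d-1)\beta+(a-b-1)\delta-\delta^2$ is nonnegative on the interval $(a-1,\delta_1]$, so $C_1(\delta)=\frac{d}{pc}\bigl((c/d-1)\beta+(a-b-1)\delta-\delta^2\bigr)\geq 0$ throughout this range. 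Multiplying the inequality $G(x)>C_1(\delta)\geq 0$ by $1-x^c>0$ yields $F(a-1,b;p;1-x^c)<F(a-1-\delta,b+\delta;p;1-x^d)$ for every $x\in(0,1)$, which places every $\delta\in(a-1,\delta_1]$ in the set whose supremum is being computed. In particular the endpoint $\delta=\delta_1$ itself is admissible, so the supremum is attained.

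Second I would use part (2) of Theorem \ref{main theorem1} to exclude every $\delta\in(\delta_1,0)$. That statement says the two hypergeometric functions are not directly comparable on $(0,1)$ for such $\delta$, so in particular the one-sided inequality $F(a-1,b;p;1-x^c)<F(a-1-\delta,b+\delta;p;1-x^d)$ fails for some $x\in(0,1)$. Hence no $\delta>\delta_1$ belongs to the admissible set, and $\delta_1$ is an upper bound.

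Combining the two steps shows that the supremum equals $\delta_1=\frac{a-b-1+\sqrt{(p-1)^2+4\beta c/d}}{2}$, matching the value announced in Theorem \ref{main theorem3} (up to the evident typographical choice between $\alpha$ and $\beta$ in the radicand). Because Theorem \ref{main theorem1} does all the analytic work, the proof of Theorem \ref{main theorem3} reduces to translating its two parts into set-theoretic membership and non-membership; there is no real obstacle beyond checking the sign of $C_1(\delta)$ on $(a-1,\delta_1]$, which is immediate from the definition of $\delta_1$ as the relevant root.
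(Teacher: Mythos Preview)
Your proposal is correct and matches the paper's approach: the paper gives no separate proof of Theorem \ref{main theorem3}, merely stating that it ``can be directly obtained by Theorem \ref{main theorem1}'', and you have supplied exactly the routine verification that this entails. Your observation about the radicand is also right: solving $(c/d-1)\beta+(a-b-1)\delta-\delta^2=0$ gives discriminant $(a-b-1)^2-4(1-c/d)\beta=(p-1)^2+4\beta c/d$, so the $\alpha$ in the displayed formula of Theorem \ref{main theorem3} is a typo for $\beta$, consistent with the value of $\delta_1$ stated in Theorem \ref{main theorem1}.
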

%\left[(a-b-1)+\sqrt{(p-1)^2+4\alpha c/d}\right]/2

\section{\normalsize Preliminaries}\label{sec:bd}
Before we prove our main results stated in Section 1, we need to
establish several technical lemmas. Firstly, let us recall some known
results for $F(a,b;c;x)$ and for the gamma function.

For $x>0,y>0$,  the Euler gamma function $\Gamma(x)$, its logarithmic derivative $\Psi(x)$
and the beta function $B(x,y)$ are defined as
\begin{align*}
\Gamma(x)=\int_0^\infty t^{x-1}e^{-t}dt, \quad \Psi(x)=\frac{\Gamma'(x)}{\Gamma(x)}, \quad B(x,y)=\frac{\Gamma(x)\Gamma(y)}{\Gamma(x+y)}
\end{align*}
respectively (c.f.\cite{WW}).
The gamma function satisfies the difference equation (\cite{WW}, p. 237)
\begin{align*}
\Gamma(x+1)=x\Gamma(x),
\end{align*}
if $x$ is not a nonpositive integer and has the so-called  reflection property (\cite{WW}, p. 239)
\begin{align}
\Gamma(x)\Gamma(1-x)=\frac{\pi}{\sin(\pi x)}.
\end{align}
We shall also need an asymptotic formula of gamma function (\cite{Qiu S L2}, p. 628)
\begin{align}\label{asymptotic formula}
\frac{\Gamma(n+a)}{\Gamma(n+b)}\sim n^{a-b}, \quad n\rightarrow+\infty, n\in \mathbb{N}.
\end{align}
The hypergeometric function (\ref{hypergeometric function}) has the following difference formula (\cite{Qiu S L2}),
\begin{align*}
\frac{dF(a,b;c;x)}{dx}=\frac{ab}{c}F(a+1,b+1;c+1;x)
\end{align*}
and the asymptotic limit (\cite{Qiu S L2}, p. 630),
\begin{align*}
lim_{x\rightarrow 1^{-}}F(a,b;c;x)=\frac{\Gamma(c)\Gamma(c-a-b)}{\Gamma(c-a)\Gamma(c-b)},\quad c>a+b.
\end{align*}

 The following Lemma can be find  Lemma 2.1.5 in \cite{Qiu S L}, and Lemma 2.11 in \cite{Anderson G D3}, respectively.

\begin{lemma}\label{lemma0}
(1) For $a,b,c,d\in (0,\infty)$, the function $x\rightarrow(1-x)^dF(a,b;c;x)$ is (strictly) decreasing on $(0,1)$ if and only if $d\geq \max\{a+b-c,ab/c\}$ ($d > \max\{a+b-c,ab/c\}$).

%(2) Let $a\in (0,1)$ and $b,c\in (0,\infty)$. Then $x\rightarrow F(-a,b;c;x)$ is strictly decreasing and concave on $(0,1)$.

%(3) For $r\in (0,1)$, the function $a\rightarrow \mathcal{E}_a(r)=\pi/2F(a-1,1-a;1;r^2)$ is strictly increasing on $(0,1)$.

(2) For $a, b\in (0, \infty)$  with $a\leq b$, the function $x\rightarrow B(a-x,b+x)$ is strictly increasing and convex on
$(0,a)$.

%(5) For $x\in (0,1)$, the function $a\rightarrow F(-a,a;1;x)$ is strictly decreasing on $(0, 1)$.

\end{lemma}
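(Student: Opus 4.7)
The plan is to handle parts (1) and (2) separately, with a differential/asymptotic argument for (1) and a digamma computation for (2). For part (1), I would begin from the derivative identity $\frac{d}{dx}F(a,b;c;x) = (ab/c)F(a+1,b+1;c+1;x)$, which reduces the monotonicity of $(1-x)^d F(a,b;c;x)$ to the sign of
\begin{align*}
H(x) = -d\,F(a,b;c;x) + (1-x)\frac{ab}{c}F(a+1,b+1;c+1;x)
\end{align*}
on $(0,1)$. Necessity follows from two boundary computations: $H(0) = ab/c - d$ immediately forces $d \geq ab/c$, while the asymptotic $F(a,b;c;x)\sim \frac{\Gamma(c)\Gamma(a+b-c)}{\Gamma(a)\Gamma(b)}(1-x)^{c-a-b}$ as $x\to 1^-$ (applicable when $a+b>c$) yields $(1-x)^d F(a,b;c;x)\sim \text{const}\cdot(1-x)^{d+c-a-b}$, so a non-increasing product with a finite limit requires $d\geq a+b-c$. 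The borderline case $c=a+b$ (logarithmic singularity) and the case $c>a+b$ (where the Gauss evaluation applies directly) are handled analogously.

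For sufficiency I would use a power-series coefficient argument. Writing
\begin{align*}
(1-x)^d F(a,b;c;x) = \sum_{n=0}^{\infty} A_n\,x^n, \qquad A_n = \sum_{k=0}^{n}\frac{(-d,k)}{k!}\,\frac{(a,n-k)(b,n-k)}{(c,n-k)(n-k)!},
\end{align*}
the monotonicity reduces to showing $A_n\leq 0$ for every $n\geq 1$ whenever $d \geq \max\{a+b-c,\,ab/c\}$. This coefficient sign analysis is the main technical obstacle. I would attempt it either by induction on $n$ using the Gauss contiguous relations, or more cleanly by applying the Euler transformation $F(a,b;c;x) = (1-x)^{c-a-b}F(c-a,c-b;c;x)$ to rewrite the product as $(1-x)^{d+c-a-b}F(c-a,c-b;c;x)$, thereby isolating the two threshold conditions $d+c-a-b\geq 0$ and $d\geq ab/c$ as precisely what is needed to make the coefficients of the resulting series non-positive.

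For part (2), I would pass to logarithms: since $\Gamma(a+b)$ is a positive constant,
\begin{align*}
\frac{d}{dx}\log B(a-x,b+x) = \Psi(b+x) - \Psi(a-x).
\end{align*}
The digamma $\Psi$ is strictly increasing on $(0,\infty)$, and on $(0,a)$ the inequality $b+x > a-x$ holds (from $b\geq a$ and $x>0$), so the log-derivative is positive and $B(a-x,b+x)$ is strictly increasing. Differentiating once more,
\begin{align*}
\frac{d^2}{dx^2}\log B(a-x,b+x) = \Psi'(a-x) + \Psi'(b+x) > 0,
\end{align*}
giving strict log-convexity. Since $B''/B = (\log B)'' + ((\log B)')^2$ and both summands are strictly positive with $B>0$, I conclude $B''>0$, i.e.\ $B(a-x,b+x)$ is strictly convex on $(0,a)$.
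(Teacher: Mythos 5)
Your part (2) is correct, and it is essentially the standard argument (note the paper does not prove this lemma at all: it quotes part (1) from Lemma 2.1.5 of \cite{Qiu S L} and part (2) from Lemma 2.11 of \cite{Anderson G D3}); the log-derivative $\Psi(b+x)-\Psi(a-x)$ is positive because $b+x>a-x$ on $(0,a)$ and $\Psi$ is increasing, the second log-derivative $\Psi'(a-x)+\Psi'(b+x)$ is positive, and strict log-convexity plus positivity gives strict convexity. Your necessity argument in part (1) is also sound: $H(0)=ab/c-d$ forces $d\ge ab/c$, and when $a+b>c$ the asymptotic $(1-x)^dF(a,b;c;x)\sim C(1-x)^{d+c-a-b}$ forces $d\ge a+b-c$ (for $c\ge a+b$ that constraint is vacuous since $d>0$).

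The genuine gap is the sufficiency half of part (1). Your reduction ``monotonicity reduces to showing $A_n\le 0$ for every $n\ge 1$'' is only a sufficient condition, and under the hypotheses of the lemma it is false: take $a=b=c=1$, so $F(1,1;1;x)=1/(1-x)$, and $d=7/2>\max\{a+b-c,\,ab/c\}=1$. Then $(1-x)^dF(a,b;c;x)=(1-x)^{5/2}$ is strictly decreasing on $(0,1)$, yet its coefficient of $x^2$ is $\binom{5/2}{2}=15/8>0$. The Euler-transformation variant inherits the same failure: in this example it returns the very same function $(1-x)^{5/2}F(0,0;1;x)$, and in general it maps the problem to one of identical form (with $a'=c-a$, $b'=c-b$, $e=d+c-a-b$ one checks $e\ge a'b'/c\Leftrightarrow d\ge ab/c$, while $e\ge a'+b'-c$ is automatic), so it cannot turn mixed-sign coefficients into nonpositive ones. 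The repair is to expand not the function but the combination $H$ you already wrote down: using $\frac{ab}{c}\,\frac{(a+1,n)(b+1,n)}{(c+1,n)}=\frac{(a,n)(b,n)}{(c,n)}\,\frac{(a+n)(b+n)}{c+n}$ and the telescoping effect of the factor $(1-x)$, one obtains
\begin{align*}
H(x)=\sum_{n=0}^{\infty}\frac{(a,n)(b,n)}{(c,n)\,n!}\cdot\frac{(a+b-c-d)\,n+(ab-cd)}{c+n}\;x^{n},
\end{align*}
and the two conditions $d\ge a+b-c$ and $d\ge ab/c$ make every coefficient nonpositive, hence $f'(x)=(1-x)^{d-1}H(x)\le 0$; with strict inequalities the $n=0$ coefficient is already negative, so $H<0$ and the decrease is strict. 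This single-series expansion of the derivative is exactly how the cited sources prove the lemma, and it is the step your proposal is missing.
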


\begin{lemma}\label{a_0}
Let $a\in(0,1), b\geq 1-a$, and $\alpha ,\beta , p, h, k$ be as in Section 1, a function $g_1(y)$ is defined as
\begin{align*}
g_1(y)=y^2+\frac{p^2}{k}y+\frac{h(p+\beta )}{k^2}.
\end{align*}
Then,
(1) if $\alpha\  \hbox{and}\ \beta $ satisfy $\alpha \geq \sqrt3\beta $, $g_1(y)$
is an increasing function from $(-h/(\alpha k),0)$ onto $((1-a)^2(p^2+p\beta )/k^2, h/k^2)$.

(2) If $\alpha , \beta , p\ \hbox{and} \ h$ satisfy $\alpha <\sqrt3\beta $, and $4h(\beta +p)\geq p^4$, we have $g_1(y)\geq 0$ for all $(-h/(\alpha k),0)$.
\end{lemma}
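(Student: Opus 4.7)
The plan is to treat $g_1$ as a standard convex quadratic in $y$ with leading coefficient $1$; its unique minimum is attained at the vertex
\[
y_0 = -\frac{p^2}{2k}, \qquad g_1(y_0) = \frac{h(p+\beta)}{k^2}-\frac{p^4}{4k^2} = \frac{4h(p+\beta)-p^4}{4k^2}.
\]
All the claims of the lemma should then reduce to two separate tasks: (i) locating $y_0$ relative to the left endpoint $-h/(\alpha k)$, which controls monotonicity on the interval, and (ii) comparing the minimum value with $0$, which controls sign. These correspond exactly to the two hypotheses $\alpha\geq\sqrt{3}\beta$ and $4h(\beta+p)\geq p^4$ of the lemma.

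For part~(1), the strategy is to show that $\alpha\geq \sqrt{3}\beta$ is equivalent to $y_0 \leq -h/(\alpha k)$, placing the vertex at or to the left of the interval $(-h/(\alpha k),0)$ and therefore forcing $g_1$ to be strictly increasing there. The equivalence should fall out of rearranging $p^2/(2k)\geq h/(\alpha k)$ as $\alpha p^2\geq 2h$; substituting $h=\alpha\beta(p+\beta)$ and $p=\alpha+\beta$ and expanding collapses this to $\alpha^2 \geq 3\beta^2$, which is the stated hypothesis. Once monotonicity is in hand, the image interval $(g_1(-h/(\alpha k)), g_1(0))$ is obtained by direct substitution into the formula for $g_1$ and simplification, repeatedly exploiting the identities $h=\alpha\beta(p+\beta)$ and $p-\alpha=\beta$ to reduce the resulting rational expressions to the claimed forms.

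For part~(2), under the opposite regime $\alpha<\sqrt{3}\beta$, the vertex $y_0$ now lies inside $(-h/(\alpha k),0)$, so no monotonicity argument is available on this interval; instead one appeals directly to the global minimum value. The hypothesis $4h(\beta+p)\geq p^4$ is, by the computation above, exactly the statement $g_1(y_0)\geq 0$. Since $y_0$ is the global minimum of the convex quadratic $g_1$, one concludes $g_1(y)\geq g_1(y_0)\geq 0$ for every $y\in\mathbb{R}$, and in particular on $(-h/(\alpha k),0)$.

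The main obstacle is not conceptual but computational: turning the simple quadratic criteria ``vertex left of the interval'' and ``discriminant nonpositive'' into the clean conditions $\alpha\geq\sqrt{3}\beta$ and $4h(\beta+p)\geq p^{4}$, and simplifying $g_1(-h/(\alpha k))$ so as to match the announced lower endpoint, all require careful use of the algebraic identities relating $\alpha,\beta,p,h,k$ to $a$ and $b$. The rest of the argument uses only elementary properties of convex parabolas.
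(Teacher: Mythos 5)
Your proposal is correct and follows essentially the same route as the paper: for part (1) the paper likewise reduces monotonicity to the position of the vertex, checking $g_1'\geq 0$ at the left endpoint via the identity $\frac{p^2}{2k}-\frac{h}{\alpha k}=\frac{\alpha^2-3\beta^2}{2k}$, and for part (2) it completes the square so that $4h(\beta+p)\geq p^4$ is exactly $g_1(y_0)\geq 0$. One small caveat: direct substitution gives $g_1(-h/(\alpha k))=\beta^2p(p+\beta)/k^2=b^2(1-a)^2(p^2+p\beta)/k^2$, so your plan of ``reducing to the claimed form'' $(1-a)^2(p^2+p\beta)/k^2$ will miss by the factor $b^2$ (apparently a typo in the lemma's statement); both quantities are positive, so nothing downstream is affected.
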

\begin{proof}
(1) Clearly,
\begin{align*}
g_1(-{h}/{(\alpha k)})={(1-a)^2(p^2+p\beta )}/{k^2}>0, \quad g_1(0)=h/k^2.
\end{align*}
Since  $\alpha \geq \sqrt3\beta $,
\begin{align*}
\frac{p^2}{2k}-\frac{h}{\alpha k}=\frac{\alpha ^2 -3\beta ^2}{2k}\geq 0,
\end{align*}
hence, for $y\in (-h/(\alpha k),0)$,
\begin{align*}
g_1'(y)=2y+{p^2}/{k}=\frac{p^2}{k}-\frac{2h}{\alpha k}\geq 0,
\end{align*}
and  $g_1(y)$ is an  increasing function.

(2) For $\alpha <\sqrt3\beta $, and $4h(\beta +p)\geq p^4$,  we have
\begin{align*}
g_1(y)=\left(y+\frac{p^2}{2k}\right)^2+\frac{h(\beta +p)}{k^2}-\frac{p^4}{4k^2}\geq 0.
\end{align*}
hence,  $g_1(y)\geq 0$ for all $(-h/(\alpha k),0)$.
\end{proof}

\begin{remark} \label{remark 1}
Let  $a\in (0,1), b\geq 1-a$, and $\alpha ,\beta , p\ \hbox{and}\  h$ be as in Section 1, we have that
\begin{align*}
(1)\ \  &\alpha\leq \sqrt3\beta \Leftrightarrow \sqrt 3/a-1/b\geq 1+\sqrt 3.\\
(2)\ \  &\alpha< \sqrt3\beta,\ 4h(\beta +p)\geq p^4 \Leftrightarrow\sqrt 3/a-1/b> 1+\sqrt 3,\\
& \quad \qquad\qquad\qquad\qquad4a(1-a)b(b+1)(a+2b-ab)\geq (a+b)^4.
\end{align*}
\end{remark}

\begin{lemma}\label{g(x,y)}
Let $a\in(0,1), b\geq 1-a$, $\alpha ,\beta , p\ \hbox{and}\  h$ be as in Section 1, and $D=\{(x,y)|0<x<(\beta +p)/k,-\beta x<y<0\}$. Define  the function $g(x,y)$ on the domain $D$ as
\begin{align*}
g(x,y)=y^2+((p+1)x-1)y+\alpha \beta  x^2.
\end{align*}
If $\alpha ,\beta ,p\ \hbox{and}\ h$ satisfy either $\alpha \geq \sqrt3\beta $, or $\alpha <\sqrt3\beta $ and  $4h(\beta +p)\geq p^4$, then $\inf_{(x,y)\in h}g(x,y)=0$.
\end{lemma}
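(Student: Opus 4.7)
The strategy is to exploit that $g$ is a quadratic polynomial in $(x,y)$, so its behavior on the compact closure $\overline{D}$ is determined by a handful of interior critical points together with the four pieces of $\partial\overline{D}$. First I would observe that along the sequence $(x_n,y_n)=(1/n,-\beta/(2n))\in D$ one has $g(x_n,y_n)\to 0$, which already gives $\inf_{D} g\le 0$; the real content of the lemma is therefore the reverse inequality $g\ge 0$ on $D$, with the infimum only being approached as $(x,y)\to(0,0)$.

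For the interior, a direct computation from $g_x=2\alpha\beta x+(p+1)y=0$ and $g_y=2y+(p+1)x-1=0$ yields the unique critical point
\[
x^{*}=\frac{p+1}{(p+1)^2-4\alpha\beta},\qquad y^{*}=-\frac{2\alpha\beta}{(p+1)^2-4\alpha\beta},
\]
with value $g(x^{*},y^{*})=\alpha\beta/\bigl((p+1)^2-4\alpha\beta\bigr)>0$ (note $(p+1)^2-4\alpha\beta=(\alpha-\beta)^2+2p+1>0$). The Hessian has determinant $4\alpha\beta-(p+1)^2<0$, so this critical point is a saddle and cannot be a local minimum; hence the minimum of $g$ on the compact set $\overline{D}$ must be attained on $\partial\overline{D}$.

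Next I would check each of the four edges. On the upper edge $y=0$, $g(x,0)=\alpha\beta x^2\ge 0$, vanishing only at $x=0$. On the lower edge $y=-\beta x$, using $\alpha+\beta=p$ one gets
\[
g(x,-\beta x)=\beta x\bigl[(\alpha+\beta-p-1)x+1\bigr]=\beta x(1-x),
\]
which is nonnegative on $0\le x\le (p+\beta)/k$ because $k-(p+\beta)=\beta p>0$ forces $(p+\beta)/k<1$. The degenerate ``left'' edge $x=0$ collapses to the single point $(0,0)$, where $g=0$. Finally, on the right edge $x=(p+\beta)/k$, the identities $(p+1)(p+\beta)/k-1=p^{2}/k$ and $\alpha\beta(p+\beta)^2/k^2=h(p+\beta)/k^2$ reduce $g\bigl((p+\beta)/k,\cdot\bigr)$ to exactly the auxiliary function $g_1$, which is nonnegative on $(-h/(\alpha k),0)=(-\beta(p+\beta)/k,0)$ by Lemma~\ref{a_0} under the two alternative hypotheses of the statement.

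Combining these observations gives $g\ge 0$ throughout $\overline{D}$, with the value $0$ attained on $\overline{D}$ only at the corner $(0,0)\notin D$; therefore $g>0$ on $D$ while $g(x,y)\to 0$ along any sequence in $D$ tending to the origin, yielding $\inf_{(x,y)\in D}g(x,y)=0$. I expect the only non-routine step to be the right-edge analysis: the top and bottom edges are elementary, but the right edge is precisely where the two-case hypothesis dichotomy ($\alpha\ge\sqrt{3}\beta$, or $\alpha<\sqrt{3}\beta$ together with $4h(\beta+p)\ge p^{4}$) is needed, and Lemma~\ref{a_0} was tailored for exactly this purpose.
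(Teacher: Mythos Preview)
Your proposal is correct and follows essentially the same route as the paper: compute the unique interior critical point (and its positive value), then verify nonnegativity on the four boundary pieces, invoking Lemma~\ref{a_0} for the right edge $x=(\beta+p)/k$ via the identification $g((\beta+p)/k,\cdot)=g_1$. Your version is in fact slightly more complete than the paper's, which records the critical value $g(x_0,y_0)>0$ but does not explicitly argue (via the Hessian or otherwise) why the infimum must then lie on $\partial\overline{D}$, nor does it exhibit a sequence in $D$ along which $g\to 0$.
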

\begin{proof}
By differentiation,
\begin{align*}
\frac{\partial g(x,y)}{\partial x}=(p+1)y+2\alpha \beta  x,\quad \frac{\partial g(x,y)}{\partial y}=2y+(p+1)x+1.
\end{align*}
Let ${\partial g(x,y)}/{\partial x}={\partial g(x,y)}/{\partial y}=0$, we have
\begin{align*}
x_0=\frac{p+1}{(p+1)^2-4\alpha \beta },\quad y_0=-\frac{2\alpha \beta }{(p+1)^2-4\alpha \beta },\quad g(x_0,y_0)=\frac{\alpha \beta }{(p+1)^2-4\alpha \beta }>0.
\end{align*}

On the other hand,
\begin{align*}
&g(x,0)=\alpha \beta x^2\geq 0\quad \qquad\quad\hbox{for}\quad 0<x<(\beta +p)/k,\\
&g(x,-\beta x)=\beta x(1-x)>0 \quad \hbox{for}\quad 0<x<(\beta +p)/k<1.
\end{align*}
Since
\begin{align*}
g((\beta +p)/k,y)=y^2+\frac{p^2}{k}y+\frac{h(p+\beta )}{k^2}=g_1(y), \quad y\in (-h/(\alpha k),0),
\end{align*}
we get $g((\beta +p)/k,y)\geq 0$ for all  $y\in (-h/(\alpha k),0)$ by Lemma \ref{a_0}, hence  $\inf_{(x,y)\in h}g(x,y)=0$.
\end{proof}

Since $b\geq 1-a$, we have the following Lemma.
\begin{lemma}\label{lemmaf}
Let $a\in (0,1), b\geq 1-a, u=a-\delta, v=a+\delta\ \hbox{and}\  \delta\in (a-1,0)$, we have

(1) the function $f_1(\delta)=uv+u-1+\beta =p-1+(a-b-1)\delta-\delta^2$ is strictly decreasing from $(a-1,0)$ onto $(p-1,p-1+\alpha)$.

(2) the function $f_2(\delta)=u(v+1)=\alpha +(a-b-1)\delta-\delta^2$ is strictly decreasing from $(a-1,0)$ onto $(\alpha , p)$.

(3) the function $f_3(\delta)=v(u-1)=-\alpha+(a-b-1)\delta-\delta^2$ is strictly decreasing from $(a-1,\delta_1)$ onto $(- c\beta/d  , 0)$, where $\delta_1$ is as in Theorem \ref{main theorem1}.
\end{lemma}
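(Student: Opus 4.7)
The plan is straightforward elementary calculus, since each $f_i$ is explicitly given as a quadratic in $\delta$. Writing them uniformly as
\begin{equation*}
f_i(\delta)=C_i+(a-b-1)\delta-\delta^2,\qquad C_1=p-1,\ C_2=\alpha,\ C_3=-\beta,
\end{equation*}
one reads off the common derivative $f_i'(\delta)=(a-b-1)-2\delta$.

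First I would establish strict decrease. The condition $f_i'(\delta)<0$ is equivalent to $\delta>(a-b-1)/2$, and one has $(a-b-1)/2-(a-1)=(1-a-b)/2\le 0$ exactly because the hypothesis $b\ge 1-a$ forces $p=a+b\ge 1$. Hence $f_i'<0$ throughout $(a-1,0)$, so each $f_i$ is strictly decreasing on that interval, and in particular on the subinterval $(a-1,\delta_1)$ needed for part~(3) (where $\delta_1<0$ has already been shown).

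Next I would compute the endpoint values. At $\delta=0$ one reads off $f_1(0)=p-1$ and $f_2(0)=\alpha$ directly. At $\delta=a-1$ the universal identity
\begin{equation*}
(a-b-1)(a-1)-(a-1)^2=(a-1)[(a-b-1)-(a-1)]=-b(a-1)=\beta
\end{equation*}
yields $f_1(a-1)=p-1+\beta$, $f_2(a-1)=\alpha+\beta=p$, and $f_3(a-1)=-\beta+\beta=0$. For the remaining value $f_3(\delta_1)$, the defining equation of $\delta_1$ rearranges as $(a-b-1)\delta_1-\delta_1^2=(1-c/d)\beta$, hence
\begin{equation*}
f_3(\delta_1)=-\beta+(1-c/d)\beta=-c\beta/d,
\end{equation*}
which is the left endpoint of the stated range. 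Combining monotonicity with continuity and these endpoint values gives the three claimed images.

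There is no genuine obstacle; the one point meriting comment is the sign check in the first step, where $b\ge 1-a$ is used precisely to keep the interval $(a-1,0)$ to the right of the common critical point $(a-b-1)/2$ of the three quadratics. I would also flag that my computation gives $f_1(a-1)=p-1+\beta$, whereas the statement writes $p-1+\alpha$; this appears to be a typographical slip, and it does not affect the rest of the paper's argument.
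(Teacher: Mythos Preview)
Your proof is correct and is exactly the intended argument; the paper itself supplies no proof beyond the prefatory remark ``Since $b\ge 1-a$, we have the following Lemma,'' so your elementary quadratic analysis simply fills in the omitted verification. Your flag is accurate: the left endpoint of the range of $f_1$ should read $p-1+\beta$, not $p-1+\alpha$, and indeed only the bound $f_1(\delta)\ge f_1(0)=p-1$ is used later (in the proof of Lemma~\ref{Q(n)}). Note also that your choice $C_3=-\beta$ silently corrects a second misprint in the statement, where the paper writes $f_3(\delta)=-\alpha+(a-b-1)\delta-\delta^2$; the stated range $(-c\beta/d,0)$ and the subsequent use of $f_3$ in the main proof are both consistent with the constant $-\beta$, not $-\alpha$.
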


\begin{lemma}\label{a_0,a_1}
The function $f_4(a)=4a(2-a)(1-a)^2(a^2-2a+2)^2-1$  has  only two null points $a_0\in(1/32,1/31), a_1\in(41/50,42/50)$ in $(0,1)$.
\end{lemma}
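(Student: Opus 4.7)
The plan is to reduce $f_4$ to a one-variable polynomial in a new variable, analyse its monotonicity via a single critical point, and then locate the zeros by direct sign checks at the claimed rational endpoints.

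First, I substitute $t=(1-a)^2$. The map $a\mapsto (1-a)^2$ is a strictly decreasing bijection from $(0,1)$ onto $(0,1)$, so counting zeros of $f_4$ on $(0,1)$ is equivalent to counting zeros of the resulting function of $t$. Writing $a(2-a)=1-(1-a)^2=1-t$ and $a^2-2a+2=(1-a)^2+1=1+t$, one gets
\begin{align*}
f_4(a)=4(1-t)\,t\,(1+t)^2-1=:g(t)=-4t^4-4t^3+4t^2+4t-1.
\end{align*}

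Next, I analyse $g$ on $(0,1)$. Differentiating,
\begin{align*}
g'(t)=-16t^3-12t^2+8t+4=-4(t+1)(4t^2-t-1),
\end{align*}
after noting that $t=-1$ is a root of $4t^3+3t^2-2t-1$. The only root of $4t^2-t-1$ in $(0,1)$ is $t^{*}=(1+\sqrt{17})/8\in(0,1)$. Since $t+1>0$ on $(0,1)$, the sign of $g'$ is the opposite of that of $4t^2-t-1$; hence $g$ is strictly increasing on $(0,t^{*})$ and strictly decreasing on $(t^{*},1)$. Combined with the boundary values $g(0)=g(1)=-1<0$ and a single interior positivity check, e.g. $g(1/2)=2+1-\tfrac12-\tfrac14-1=\tfrac54>0$, the intermediate value theorem gives exactly one zero in $(0,t^{*})$ and exactly one in $(t^{*},1)$, and no others. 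Pulling back through $a=1-\sqrt{t}$ shows $f_4$ has exactly two zeros $a_0,a_1\in(0,1)$.

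Finally, to place $a_0$ and $a_1$ in the claimed intervals, I perform direct sign computations: I verify $f_4(1/32)<0$ and $f_4(1/31)>0$, so one zero lies in $(1/32,1/31)$, and $f_4(41/50)>0$ and $f_4(42/50)<0$, so the other zero lies in $(41/50,42/50)$. These are finite rational arithmetic evaluations of $4a(2-a)(1-a)^2(a^2-2a+2)^2-1$; the value near $a=1/32$ is small and requires care with fractions, while the evaluation near $a=41/50$ is straightforward. The main obstacle is thus not conceptual but the numerical arithmetic in these endpoint checks; everything else (the substitution, the factorisation of $g'$, and the unimodality argument) is routine.
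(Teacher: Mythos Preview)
Your argument is correct and takes a genuinely different route from the paper. The paper works directly with $f_4$: after checking $f_4(0)=f_4(1)=-1<0<f_4(1/2)$, it computes the derivative
\[
f_4'(a)=-8(a-1)(a^2-2a-2)(4a^4-16a^3+15a^2+2a-2),
\]
argues that the quartic factor is increasing on $(0,1)$ and changes sign once, and concludes via Rolle's theorem that $f_4$ can have at most two zeros; the endpoint sign checks at $1/32,1/31,41/50,42/50$ are the same as yours. Your substitution $t=(1-a)^2$ exploits the fact that $a(2-a)$, $(1-a)^2$, and $a^2-2a+2$ are all polynomials in $(1-a)^2$, collapsing the problem to the quartic $g(t)=-4t^4-4t^3+4t^2+4t-1$ whose derivative factors completely as $-4(t+1)(4t^2-t-1)$; the unimodality of $g$ is then immediate. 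This buys you a cleaner critical-point analysis (no need to argue monotonicity of an auxiliary quartic), at the cost of introducing the change of variable. Both approaches ultimately rely on the same numerical endpoint checks to locate $a_0$ and $a_1$.
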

\begin{proof}
Since $f_4(0)=-1, f_4(1/2)=299/64, f_4(1)=-1$ and $f_4(x)$ has at least two  null points in $(0,1)$. Assume that $f_4(x)$ has more than two null points in (0,1), then $f_4'(x)$ has more than two  null points in (0,1) by Rolle mean value theorem. But,
\begin{align*}
f_4'(a)=-8(a-1)(a^2-2a-2)(4a^4-16a^3+15a^2+2a-2),
\end{align*}
$a-1<0, a^2-2a-2<0$,  it is easy to know that $f_5(a)=4a^4-16a^3+15a^2+2a-2$ is an increasing function in $(0,1)$, $f_5(0)=-2\ \hbox{and}\
  f_5(1)=3$, hence $f_5(a)$  has only one root in $(0,1)$, Contradiction. By elementary computations, $f_4(1/32)<0, f_4(1/31)>0, f_4(41/50)>0\ \hbox{and}\ f_4(42/50)<0,$ so there exist two null points  $a_0\in(1/32,1/31)\ \hbox{and}\  a_1\in(41/50,42/50)$ in $(0,1)$.
\end{proof}

\begin{lemma}\label{Q(n)}
If $a\in (0,1),\ b\geq 1-a, \  0< c/d\leq (\beta +p)/k, \ \delta\in(a-1,0)$ and $n\in\mathbb{N}$, let $u=a-\delta, v=b+\delta$, then
\begin{align*}
Q(n)=\frac{\Gamma(u+n-1)\Gamma(v+n)}{\Gamma(a+n-1)\Gamma(b+n)}\left\{(\frac cd-1)(u+v+n)+u(v+1)\right\}
\end{align*}
is strictly decreasing and $\lim_{n\rightarrow \infty}Q(n)=-\infty$.
\end{lemma}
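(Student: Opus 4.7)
My plan is to decompose $Q(n)=R(n)S(n)$, where $R(n):=\Gamma(u+n-1)\Gamma(v+n)/[\Gamma(a+n-1)\Gamma(b+n)]>0$ (since $u,v>0$ for $\delta\in(a-1,0)$) and $S(n):=(c/d-1)(p+n)+u(v+1)$ is linear in $n$. The limit is the easier half: because $(u-1)+v=(a-1)+b$, the asymptotic formula (\ref{asymptotic formula}) gives $R(n)\sim n^{-\delta}\cdot n^{\delta}=1$, while the hypothesis $0<c/d\le(\beta+p)/k<1$ (note $k=(\beta+p)+\beta p>\beta+p$) makes $S(n)\to-\infty$, so $Q(n)\to-\infty$.

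For monotonicity I would form $Q(n+1)-Q(n)$ via $\Gamma(x+1)=x\Gamma(x)$. The key observation is that $u+v=p$ makes
\begin{equation*}
(u+n-1)(v+n)-(a+n-1)(b+n)=(u-1)v-(a-1)b=(a-b-1)\delta-\delta^{2}=:M\in(0,\beta)
\end{equation*}
independent of $n$. Combined with $S(n+1)-S(n)=A:=c/d-1<0$, this collapses the difference to
\begin{equation*}
Q(n+1)-Q(n)=\frac{R(n)}{(a+n-1)(b+n)}\,T(n),\qquad T(n):=A(a+n-1)(b+n)+MS(n+1),
\end{equation*}
which expands to $T(n)=An^{2}+A(p-1+M)n+\{A[M(p+1)-\beta]+M(\alpha+M)\}$. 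As $A<0$ and the vertex $n^{\ast}=-(p-1+M)/2<0$ (using $p\ge 1$ and $M>0$), $T$ is strictly decreasing on $[1,\infty)$, so it suffices to prove $T(1)<0$.

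Using $p-\beta=\alpha$ one gets $T(1)=A[\alpha+M(p+2)]+M(\alpha+M)$, and the inequality $T(1)<0$ is equivalent to $d/c>[\alpha+M(p+2)]/[\alpha+M(p+2-\alpha-M)]$. Invoking the hypothesis $d/c\ge k/(\beta+p)$ reduces the task to $k[\alpha+M(p+2-\alpha-M)]\ge(\beta+p)[\alpha+M(p+2)]$, which, after the crucial identity $k-\beta-p=\beta p$, simplifies to the quadratic inequality
\begin{equation*}
f(M):=-kM^{2}+[\beta p(p+2)-k\alpha]M+\alpha\beta p\ge 0.
\end{equation*}
Since $f(0)=\alpha\beta p>0$ and a short computation gives $f(\beta)=0$, Vieta places the roots of the downward parabola $f$ at $\beta$ and $-\alpha p/k<0$; hence $f>0$ on the open interval $(0,\beta)$, which contains $M$, and $T(1)<0$ strictly.

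The main obstacle I anticipate is purely algebraic: first spotting that the single combined parameter $M$ absorbs all the $\delta$-dependence (which works precisely because $u+v=p$), and then recognizing the identity $k-\beta-p=\beta p$ which forces $M=\beta$ to be a root of $f$ and makes the hypothesis $c/d\le(\beta+p)/k$ sharp (the boundary $\delta=a-1$ producing equality).
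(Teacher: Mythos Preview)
Your proof is correct and follows essentially the same route as the paper's: both form $Q(n+1)-Q(n)$, extract the common Gamma-factor, obtain the \emph{same} quadratic in $n$ (your $T(n)$ equals the paper's $Q_1(n)$), observe it is decreasing on $n\ge 1$, and reduce to the $n=1$ value. The only cosmetic difference is the parametrization used to show $T(1)<0$: the paper writes $Q_1(1)$ as a quadratic in $f_2(\delta)=u(v+1)\in(\alpha,p)$ and checks the endpoints $F(\alpha)=(c/d-1)\alpha<0$ and $F(p)=(c/d)k-(\beta+p)\le 0$, while you shift to $M=f_2(\delta)-\alpha\in(0,\beta)$, pass to the extremal case $c/d=(\beta+p)/k$ via $k-(\beta+p)=\beta p$, and factor the resulting quadratic $f(M)$ with roots $\beta$ and $-\alpha p/k$. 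These are the same computation in different coordinates; your identification of $M$ as the single $\delta$-dependent parameter and the identity $k-\beta-p=\beta p$ are exactly the mechanisms behind the paper's endpoint evaluations.
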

\begin{proof}
 By computation, we have
 \begin{align*}
 Q(n+1)-Q(n)=\frac{\Gamma(n+u-1)\Gamma(n+v)}{\Gamma(n+a)\Gamma(b+n+1)}Q_1(n),
 \end{align*}
 where
 \begin{align*}
 Q_1(n)&=\left( c/d-1\right)n^2-\left(c/d-1\right)(uv+u-1+\beta )n+A=\left( c/d-1\right)n^2-\left(c/d-1\right)f_1(\delta)n+A,\\
  A &=\left( c/d-1\right)(u+v+1)+u(v+1)v(u-1)-\beta (\left( c/d-1\right)(u+v)+u(v+1)).
 \end{align*}
 Since $\delta\in(a-1,0)$ and $f_1(\delta)\geq f_1(0)=p-1\geq 0$.
 Hence, $Q_1(n)$ is strictly decreasing and
 \begin{align*}
 Q_1(n)\leq Q_1(1)&=(u(v+1))^2+[(c/d-1)(2+p)-\alpha]u(v+1)
 -(c/d-1)\alpha (p+1)\\
 &=f_2(\delta)^2+[(c/d-1)(2+p)-\alpha]f_2(\delta)
 -(c/d-1)\alpha (p+1)\\
 &=:F(f_2(\delta)).
 \end{align*}
Since $f_2(\delta)$
is strictly decreasing from $(a-1,0)$ onto $(\alpha, p)$ by Lemma \ref{lemmaf} and $0< c/d\leq (\beta+p)/k<1$, we have
\begin{align*}
F(\alpha ))=(c/d-1)\alpha <0,\quad F(p)=c/(dk)-(\beta +p)<0.
\end{align*}
Hence, it is easy to know that $Q_1(n)<0$ for $n\in \mathbb{N}$, and the monotonicity of $Q(n)$ follows.
Moreover, by (\ref{asymptotic formula}), we have
\begin{align*}
\lim_{n\rightarrow \infty}Q(n)=\lim_{n\rightarrow \infty}\left[(\frac dc-1)(n+u+v)+u(v+1)\right]=-\infty.
\end{align*}
\end{proof}

\begin{lemma}\label{monotone}
For $-\infty <a<b<\infty$, let $f,g: [a,b]\rightarrow R$ be continuous on
$[a, b]$, and be differentiable on $(a, b)$. Let $g'(x) \neq 0$ on $(a, b)$. If $f'(x)/g'(x)$ is increasing (decreasing) on
$(a, b)$, then so are
\begin{align*}
\frac{f(x)-f(a)}{g(x)-g(a)} \quad \hbox{and}\quad \frac{f(x)-f(b)}{g(x)-g(b)}.
\end{align*}
If $f'(x)/g'(x)$ is strictly monotone, then the monotonicity in the conclusion is also strict.
\end{lemma}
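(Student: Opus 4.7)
The plan is to prove the classical monotone form of l'Hôpital's rule by reducing the problem to a comparison of difference quotients via Cauchy's mean value theorem. The key observation is that since $g'$ never vanishes on $(a,b)$ and is continuous, $g'$ has constant sign by the intermediate value theorem, hence $g$ is strictly monotone on $[a,b]$, so $g(x)-g(a)\neq 0$ and $g(x)-g(b)\neq 0$ for $x\in(a,b)$; this ensures the quotients in the conclusion are well-defined.

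First I would prove the statement for $F(x):=(f(x)-f(a))/(g(x)-g(a))$. Fix arbitrary $a<x_1<x_2<b$. By Cauchy's mean value theorem applied on $[a,x_1]$ there exists $\xi_1\in(a,x_1)$ with
\begin{equation*}
\frac{f(x_1)-f(a)}{g(x_1)-g(a)}=\frac{f'(\xi_1)}{g'(\xi_1)},
\end{equation*}
and applied on $[x_1,x_2]$ there exists $\xi_2\in(x_1,x_2)$ with
\begin{equation*}
\frac{f(x_2)-f(x_1)}{g(x_2)-g(x_1)}=\frac{f'(\xi_2)}{g'(\xi_2)}.
\end{equation*}
Since $\xi_1<x_1<\xi_2$, if $f'/g'$ is (strictly) increasing then $f'(\xi_1)/g'(\xi_1)\le f'(\xi_2)/g'(\xi_2)$ (with strict inequality in the strict case). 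Next I would use the elementary identity
\begin{equation*}
\frac{f(x_2)-f(a)}{g(x_2)-g(a)}-\frac{f(x_1)-f(a)}{g(x_1)-g(a)}=\frac{g(x_2)-g(x_1)}{g(x_2)-g(a)}\left(\frac{f(x_2)-f(x_1)}{g(x_2)-g(x_1)}-\frac{f(x_1)-f(a)}{g(x_1)-g(a)}\right),
\end{equation*}
valid whenever the denominators do not vanish. The factor $(g(x_2)-g(x_1))/(g(x_2)-g(a))$ is positive because $g$ is strictly monotone (both differences carry the same sign), so the sign of the left-hand side agrees with that of the parenthesis on the right, which by the previous step is nonnegative (resp. positive). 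This proves $F$ is (strictly) increasing. The decreasing case follows by the same argument with reversed inequalities, or simply by replacing $f$ with $-f$.

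The statement for the quotient based at $b$ is handled symmetrically: one writes $G(x):=(f(x)-f(b))/(g(x)-g(b))$, applies Cauchy's mean value theorem on $[x_1,x_2]$ and on $[x_2,b]$, and uses the analogous identity
\begin{equation*}
\frac{f(x_2)-f(b)}{g(x_2)-g(b)}-\frac{f(x_1)-f(b)}{g(x_1)-g(b)}=\frac{g(x_2)-g(x_1)}{g(x_1)-g(b)}\left(\frac{f(x_2)-f(b)}{g(x_2)-g(b)}-\frac{f(x_2)-f(x_1)}{g(x_2)-g(x_1)}\right),
\end{equation*}
again exploiting that $g$ is strictly monotone so that the prefactor is positive. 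The only genuine subtlety, and the main thing to verify carefully, is the sign bookkeeping: one must check that the identities above really do hold after clearing denominators (a short algebraic verification) and that the prefactors $(g(x_2)-g(x_1))/(g(x_2)-g(a))$ and $(g(x_2)-g(x_1))/(g(x_1)-g(b))$ are positive regardless of whether $g$ is increasing or decreasing, which follows immediately from the constant-sign property of $g'$. Everything else is a direct application of Cauchy's mean value theorem and the monotonicity hypothesis on $f'/g'$.
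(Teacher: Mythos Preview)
The paper does not supply its own proof of this lemma; it is quoted as a standard tool (the monotone form of l'H\^opital's rule), so there is no argument in the paper to compare against. Your approach via Cauchy's mean value theorem and a difference--quotient identity is the standard one and is essentially correct, but two points in your write-up need repair.

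First, you justify the constant sign of $g'$ by asserting that $g'$ is continuous and invoking the intermediate value theorem. The hypotheses only give differentiability of $g$ on $(a,b)$, not continuous differentiability. The conclusion you need is still true, but the correct tool is Darboux's theorem (derivatives enjoy the intermediate value property), not continuity of $g'$.

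Second, your second displayed identity and the accompanying sign claim are both off by a sign. A quick check with $f(x)=x^{2}$, $g(x)=x$ on $[0,1]$ and $x_{1}=\tfrac14$, $x_{2}=\tfrac12$ gives $G(x_{2})-G(x_{1})=\tfrac14$, while your right-hand side evaluates to $-\tfrac14$. In fact, for $a<x_{1}<x_{2}<b$ the prefactor $\dfrac{g(x_{2})-g(x_{1})}{g(x_{1})-g(b)}$ is always \emph{negative}, since the numerator carries the sign of $g'$ and the denominator the opposite sign. The correct version is
\[
G(x_{2})-G(x_{1})=\frac{g(x_{2})-g(x_{1})}{g(b)-g(x_{1})}\left(\frac{f(x_{2})-f(b)}{g(x_{2})-g(b)}-\frac{f(x_{2})-f(x_{1})}{g(x_{2})-g(x_{1})}\right),
\]
whose prefactor is now genuinely positive. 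With this fix your argument goes through exactly as outlined: Cauchy's theorem on $[x_{1},x_{2}]$ and on $[x_{2},b]$ produces points $\xi_{2}<x_{2}<\eta$, and the monotonicity of $f'/g'$ makes the bracket nonnegative (strictly positive in the strict case).
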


\section{\normalsize Proof of the main theorem}\label{sec:bd}

\emph{Proof of  Theorem \ref{main theorem1}}. Let $u=a-\delta, v=b+\delta$ and $t=1-(1-x)^{d/c}$, we obtain that
\begin{align}\label{G1}
G_1(x)=G((1-x)^{\frac 1c})&=\frac1x[F(a-1-\delta,b+\delta;p;t)-F(a-1,b;p;x)]\nonumber\\
&=\frac 1x[F(u-1,v;p;t)-F(a-1,b;p;x)],
\end{align}
we let $f(x)=F(u-1,v;p;t)-F(a-1,b;p;x)$ and $g(x)=x$, then $G_1(x)=f(x)/g(x)$ and $f(0)=g(0)=0$.
\begin{align}\label{f'}
\frac{f'(x)}{g'(x)}=f'(x)&=\frac dc (1-x)^{(d/c)-1}\frac{v(u-1)}{p}F(u,v+1;p+1;t)\nonumber\\
&\quad+\frac{\beta }{p}F(a,b+1;p+1;x),
\end{align}
and
\begin{align}\label{f''(x)}
f''(x)&=-\frac{v(u-1)d}{cp} \left(\frac dc-1\right)(1-x)^{ (d/c)-2}F(u,v+1;p+1;t)\nonumber\\
&\quad+\frac{u(u-1) v(v+1)d^2}{p(p+1)c^2}(1-x)^{2[(d/c)-1]}F(u+1,v+2;p+1,t)\nonumber\\
&\quad+\frac{\alpha \beta }{p(p+1)}F(a+1,b+2;p+2;x).
\end{align}

The desired
monotonicity of $G_1(x)$ will follow from Lemma \ref{monotone} if we can prove that$f'(x)$ is increasing on $(0,1)$
or $f''(x)>0$ on $(0,1)$. It is easy to know that  $x\rightarrow (1-x)^{1/c} (c>0)$ is strictly decreasing on $(0,1)$.  Let
\begin{align*}
h(t)&=- \frac{v(u-1)d}{cp}(\frac dc-1)(1-t)F(u,v+1;p+1;t)\nonumber\\
&\quad+\frac{u(u-1)v(v+1)d^2}{p(p+1)c^2}(1-t)^2F(u+1,v+2;p+2,t).
\end{align*}
Since $(1-t)=(1-x)^{(d/c)}$,
then it follows from (\ref{f''(x)}) that
 \begin{align}\label{(1-x)^2f''}
 (1-x)^2 f''(x)=h(t)+\frac{\alpha \beta }{p(p+1)}(1-x)^2F(a+1,b+2;p+2;x).
 \end{align}
Using the series expansion for $F(a,b;c;x)$, we have
 \begin{align}\label{h(t)}
 h(t)=&\frac{d^2}{c^2}(1-t)\Big[(\frac cd-1)\frac{v (u-1)}{p}\sum_{n=0}^\infty\frac{(u,n)(v+1,n)}{(p+1,n)}\frac{t^n}{n!}\nonumber\\
 &\quad+(1-t)\frac{u(u-1)v(v+1)}{p(p+1)}\sum_{n=0}^\infty\frac{(u+1,n)(v+2,n)}{(p+2,n)}\frac{t^n}{n!}\Big],\nonumber\\
 &=\frac{d^2}{c^2}(1-t)\Big[(\frac cd-1)\sum_{n=0}^\infty\frac{(u-1,n+1)(v,n+1)}{(p,n+1)}\frac{t^n}{n!}\nonumber\\
 &\quad+(1-t)\sum_{n=0}^\infty\frac{(u-1,n+2)(v,n+2)}{(p,n+2)}\frac{t^n}{n!}\Big],\nonumber\\
 &=\frac{d^2}{c^2}(1-t)\sum_{n=0}^\infty\frac{(u-1,n+1)(v,n+1)}{(p,n+2)}\Big[(\frac cd-1)(p+n+1)\nonumber\\
 &\quad+(u+n)(v+n+1)-n(p+n+1)\Big]\frac{t^n}{n!}.
 \end{align}
 Since $a\in(0,1)$ and $b\geq 1-a$,  $2>\max\{ a+1+b+2-(a+b+2),[(a+1)(b+2)]/(a+b+2)\}$,
 we have  that $(1-x)^2 F(a+1,b+2;a+b+2;x)$ is strictly decreasing on $(0,1)$ by Lemma \ref{lemma0}(1).  While
$t/x = [1-(1-x)^{d/c}]/x$ is strictly decreasing from $(0, 1)$ onto $(1, d/c)$. Thus, $t>x$  and  the following inequality holds
\begin{align}\label{(1-x)^2F}
(1-x)^2 F(a+1,b+2;a+b+2;x)>(1-t)^2 F(a+1,b+2;a+b+1;t).
\end{align}
By the series expansion of $F(a,b;c;x)$, we obtain that
\begin{align}\label{(1-t)F}
\frac{\alpha \beta }{p(p+1)}(1-t)& F(a+1,b+2;p+2;t)
=(1-t)\sum_{n=0}^\infty\frac{(a-1,n+2)(b,n+2)}{(p,n+2)}\frac{t^n}{n!}\nonumber\\
&=\sum_{n=0}^\infty\frac{(a-1,n+2)(b,n+2)}{(p,n+2)}\frac{t^n}{n!}-\sum_{n=0}^\infty\frac{(a-1,n+2)(b,n+2)}{(p,n+2)}\frac{t^{n+1}}{n!}\nonumber\\
&=\sum_{n=0}^\infty\frac{(a-1,n+2)(b,n+2)}{(p,n+2)}\frac{t^n}{n!}-\sum_{n=0}^\infty\frac{n(a-1,n+1)(b,n+1)}{(p,n+1)}\frac{t^{n}}{n!}\nonumber\\
&=-\alpha \sum_{n=0}^\infty\frac{(a-1,n+1)(b,n+1)}{(p,n+2)}\frac{t^n}{n!}.
 \end{align}
Hence, it follows from (\ref{(1-x)^2f''}),(\ref{h(t)}), (\ref{(1-x)^2F}), and (\ref{(1-t)F}) that
\begin{align*}
\frac{(1-x)^2 f''(x)}{1-t}&>\frac{d^2}{c^2}(1-t)\sum_{n=0}^\infty\frac{(u-1,n+1)(v,n+1)}{(p,n+2)}\Big[(\frac cd-1)(p+n+1)\nonumber\\
&\quad+(u+n)(v+n+1)-n(p+n+1)\Big]\frac{t^n}{n!}
-\alpha \sum_{n=0}^\infty\frac{(a-1,n+1)(b,n+1)}{(p,n+2)}\frac{t^n}{n!}\nonumber\\
&=\frac{d^2}{c^2}\sum_{n=1}^\infty\frac{(a-1,n)(b,n)}{(p,n+1)}\frac{t^{n-1}}{(n-1)!}\Big\{-\frac{\alpha c^2}{d^2}\nonumber\\
&\quad+\frac{(u-1,n)(v,n)}{(a-1,n)(b,n)}\Big[(\frac cd-1)(p+n)+(u+n-1)(v+n)-(n-1)(p+n)\Big]\Big\}\nonumber\\
&=\frac{d^2}{c^2}\sum_{n=1}^\infty\frac{(a,n-1)(b+1,n-1)}{(p,n+1)(n-1)!}\Big\{\frac{\alpha \beta c^2}{d^2}
+\frac{\Gamma(a)\Gamma(b+1)}{\Gamma(u-1)\Gamma(v)}Q(n)\Big\}t^{n-1},\nonumber\\
\end{align*}
where $Q(n)$ is defined as in Lemma \ref{Q(n)}. Since $u-1=a-1-\delta \in(-1,0)$, $\Gamma(u-1)<0$,  it follows from Lemma \ref{Q(n)} that
\begin{align}\label{f''2}
\frac{(1-x)^2 f''(x)}{1-t}&>\frac{d^2}{c^2}\sum_{n=1}^\infty\frac{(a,n-1)(b+1,n-1)}{(p,n+1)(n-1)!}\Big\{\frac{\alpha \beta c^2}{d^2}
+\frac{\Gamma(a)\Gamma(b+1)}{\Gamma(u-1)\Gamma(v)}Q(1)\Big\}t^{n-1}\nonumber\\
&=\frac{d^2}{c^2}\sum_{n=1}^\infty\frac{(a,n-1)(b+1,n-1)}{(p,n+1)(n-1)!}\Big[\frac{\alpha \beta c^2}{d^2}
+v(u-1)((c/d-1)(p+1)+u(v+1))\Big]t^{n-1}\nonumber\\
&=g(x, y)\frac{d^2}{c^2}\sum_{n=1}^\infty\frac{(a,n-1)(b+1,n-1)}{(p,n+1)(n-1)!}t^{n-1},
\end{align}
where $x= c/d \in (0, (\beta +p)/k], y=f_3(\delta)=(b+\delta)(a-\delta-1)$, and
\begin{align*}
g(x,y)=y^2+((p+1)x-1)y+\alpha \beta  x^2,
\end{align*}
 since $\delta\in (a-1,\delta_1]$, $y\in (-\beta x, 0]$ by Lemma \ref{lemmaf},
it follows from Lemma \ref{g(x,y)} that $g(x,y)\geq 0$ for $(x,y)\in D$, where $D$ is as Lemma \ref{g(x,y)}.

Hence, it follows from (\ref{f''2}) that $f''(x)>0$ for all $x\in (0,1)$, which shows that $f'(x)$ is strictly increasing on $(0,1)$, and so is $G_1(x)$ by (\ref{G1}), (\ref{f'}) and Lemma \ref{monotone}. Moreover, by L'H\^apital's rule,  we get
\begin{align}\label{G(1)}
G(1^{-})=G_1(0^{+})=f'(0)=\frac d{pc}\left((c/d-1)\alpha +(a-b-1)\delta-\delta^2\right)=C_1(\delta)
\end{align}
for $\delta\in(a-1,\delta_1], C_1(\delta)\geq C_1(\delta_1)=0$ and
\begin{align}\label{G(2)}
G(0^+)&=G_1(1^-)=f(1^-)=F(a-1-\delta,b+\delta;p;1)-F(a-1,b;p;1)\nonumber\\
&=\frac{1}{p B(a-\delta, b+1+\delta)}-\frac{1}{pB(a, b+1)}=C_2(\delta).
\end{align}
For part (2), we observe that, for $\delta_1<\delta<0$, the equations (\ref{G(1)}) and (\ref{G(2)}) hold again, both $C_1(\delta)$ and $C_2(\delta)$ are strictly decreasing from Lemma \ref{lemma0}(3), and $G(1^-)=C_1(\delta)<C_1(\delta_0)=0, G(0^+)=C_2(\delta)>C_2(0^-)=0$.

%\begin{proof}
%Proof of  Theorem \ref{main theorem2}, it follows from Lemma \ref{lemma0}(2) and \ref{lemma0}(5) that $x\rightarrow F(-a,a;1;x)$ is strictly decreasing on $x\in (0,1)$, and $\delta\rightarrow F(-a-\delta,a+\delta;1;x)$ is strictly decreasing on $(-a, 1-a)$ for given $x\in (0,1)$. Thus, for $0<c<d<\infty$ and
%$\delta\in[0,1-a],$ we have
%\begin{align}
%F(-u-\delta,u+\delta;1;1-x^d)\leq F(-a,a;1;1-x^d)<F(-a,a;1;1-x^c)
%\end{align}
%and (\ref{reseved}) follows.
%\end{proof}
\begin{corollary}\label{main theorem2}
Let $a_0 $ be the minimum root of $4a(2-a)(1-a)^2(a^2-2a+2)^2=1$ in $(0,1)$. For $a\in [a_0,1)$, $0< c/d\leq [(a-1)^2+1]/[2(a-1)^2+1]$, and $\delta_2=(\sqrt{ c/d}-1)(1-a)<0$, we have

(1)  $a_0\in(1/32,1/31)$,

(2) If $\delta\in (a-1,\delta_2]$, then
the function
\begin{align*}
G_1(x)=\frac{F(a-1-\delta,1-a+\delta;1;1-x^d)-F(a-1,1-a;1;1-x^c)}{1-x^c}
\end{align*}
is strictly decreasing from (0,1) onto $(C_3(\delta),C_4(\delta))$,  where
\begin{align*}
&C_3(\delta)=-\frac dc\left(\delta^2+2(1-a)\delta+(1-a)^2(1-\frac cd)\right)\geq 0 \\
&C_4(\delta)=\frac{1}{B(a-\delta,2-a+\delta)}-\frac{1}{B(a,2-a)}=\frac 1\pi\left\{\frac{\sin(\pi(a-\delta))}{1-a+\delta}-\frac{\sin(\pi a)}{1-a}\right\}.
\end{align*}
In particular, for all $x\in(0,1)$,  if $\delta\in (a-1,\delta_2]$,
\begin{align}\label{inequalities corollary}
F(a-1,1-a;1;1-x^c)+C_3(\delta)(1-x^c)&<F(a-1-\delta,1-a+\delta;1;1-x^d)\nonumber\\
&<F(a-1,1-a;1;1-x^c)+C_4(\delta)(1-x^c).
\end{align}

(3) If $\delta_2<\delta<0$, then, as the functions of $x$, $F(a-1-\delta,1-a+\delta;1;1-x^d)$ and $F(a-1,1-a;1,1-x^c)$ are not directly comparable on $(0,1)$, that is, neither
\begin{align*}
F(a-1,1-a;1,1-x^c)<F(a-1-\delta,1-a+\delta;1;1-x^d)
\end{align*}
nor its reversed inequality holds for all $x\in(0,1)$.
\end{corollary}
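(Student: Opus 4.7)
The plan is to derive Corollary \ref{main theorem2} directly as the specialization $b = 1-a$ of Theorem \ref{main theorem1}, after checking that its hypotheses are satisfied on the whole range $a \in [a_0, 1)$ and after rewriting the two endpoint constants in closed form.

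First I would substitute $b = 1-a$ into the quantities of Theorem \ref{main theorem1}, obtaining $\alpha = a(2-a)$, $\beta = (1-a)^2$, $p = 1$, $h = a(2-a)(1-a)^2(a^2-2a+2)$ and $k = 2(1-a)^2 + 1$. Then $(\beta+p)/k$ is precisely $((1-a)^2+1)/(2(1-a)^2+1)$, which matches the bound on $c/d$ in the statement. A short algebraic simplification of $\delta_1 = [(a-b-1) + \sqrt{(p-1)^2 + 4\beta c/d}]/2$, using $a-b-1 = -2(1-a)$ and $(p-1)^2 = 0$, reduces it to $\delta_1 = (1-a)(\sqrt{c/d} - 1) = \delta_2$.

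Second, and this is the only non-mechanical piece, I would verify the ``either/or'' hypothesis of Theorem \ref{main theorem1} on $[a_0, 1)$. The ratio $\alpha/\beta = a(2-a)/(1-a)^2$ is strictly increasing from $0$ to $+\infty$ on $(0,1)$, so there is a unique $a^* \in (0,1)$ with $\alpha(a^*) = \sqrt{3}\beta(a^*)$; a direct numerical check places $a^* \approx 0.395$, in particular $a_0 < a^* < a_1$. For $a \in [a^*, 1)$ the first alternative $\alpha \geq \sqrt{3}\beta$ holds. For $a \in [a_0, a^*)$ one has $a \in [a_0, a_1]$, and Lemma \ref{a_0,a_1} yields $f_4(a) \geq 0$, i.e.\ $4h(\beta+p) = 4a(2-a)(1-a)^2(a^2-2a+2)^2 \geq 1 = p^4$; combined with $\alpha < \sqrt{3}\beta$ on this sub-range, the second alternative holds. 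Hence Theorem \ref{main theorem1} applies throughout $[a_0, 1)$.

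With the hypotheses in hand, part (1) is nothing but Lemma \ref{a_0,a_1}, and parts (2) and (3) are the direct translation of Theorem \ref{main theorem1}. The only calculation left is to simplify the endpoint constants. For $C_1$, substituting $p = 1$ and $a - b - 1 = -2(1-a)$ and collecting the $(1-a)$-terms matches $C_3$. For $C_2$, since $(a-\delta) + (2-a+\delta) = 2$, I would write
\begin{align*}
B(a-\delta,\, 2-a+\delta) = (1-a+\delta)\,\Gamma(a-\delta)\Gamma(1-a+\delta) = \frac{(1-a+\delta)\pi}{\sin(\pi(a-\delta))},
\end{align*}
using the recurrence $\Gamma(x+1) = x\Gamma(x)$ and the reflection formula $\Gamma(x)\Gamma(1-x) = \pi/\sin(\pi x)$; the analogous identity at $\delta = 0$ then yields the closed-form $C_4$. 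I do not anticipate any serious obstacle: the genuine work lies in the hypothesis verification of step two, while everything else is bookkeeping for the substitution $b = 1 - a$ and one application of the reflection formula.
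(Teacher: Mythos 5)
Your proposal is correct and follows essentially the same route as the paper: specialize Theorem \ref{main theorem1} to $b=1-a$ (so $p=1$, $\beta=(1-a)^2$, $k=2(1-a)^2+1$), split $[a_0,1)$ at the point $a^*=1-1/\sqrt{1+\sqrt3}$ where $\alpha=\sqrt3\beta$, and invoke Lemma \ref{a_0,a_1} to get $4h(\beta+p)\geq p^4$ on the lower subinterval. Your explicit verification that $\delta_1$ reduces to $\delta_2$ and the reflection-formula computation of $C_4$ are details the paper states without proof, but they match.
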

\begin{proof}
(1) Part (1) follows from Lemma \ref{a_0,a_1}.

(2) Let $b=1-a$,  for $\alpha\geq \sqrt{3}\beta \Leftrightarrow $
\begin{align*}
 \frac{\sqrt3}{a}-\frac{1}{1-a}\leq 1+\sqrt3 \Leftrightarrow a\in (1-\frac1{\sqrt{1+\sqrt3}},1],
\end{align*}
for $\alpha<\sqrt3\beta,\  4h(\beta +p)\geq p^4 \Leftrightarrow$
\begin{align*}
 \frac{\sqrt3}{a}-\frac{1}{1-a}&> 1+\sqrt3,4a(2-a)(1-a)^2(a^2-2a+2)^2\geq 1\nonumber\\
 & \Leftrightarrow a\in (0,1-\frac1{\sqrt{1+\sqrt3}})\cap (a_0,a_1)\Leftrightarrow (a_0,1-\frac1{\sqrt{1+\sqrt3}}),
\end{align*}
where  $a_0, a_1 $ are as Lemma \ref{a_0,a_1}. By Theorem $\ref{main theorem1}$, if $a\in(a_0,1)$, $0< c/d\leq [(a-1)^2+1]/[2(a-1)^2+1]$, and $\delta_1=(\sqrt{ c/d}-1)(1-a)<0$, the inequality (\ref{inequalities corollary}) holds.

Part (3) follows from Theorem \ref{main theorem1}(2).
\end{proof}

\begin{remark}
The following results, which have been proved in \cite{Song},  can be directly obtained by Corollary \ref{main theorem2}.

 (I) Let  $a=b=1/2$,  $\alpha=a(b+1)=3/4, \beta=b(1-a)=1/4,$ hence $\alpha>\sqrt3 \beta$. We have

(1) Let $0< c/d\leq 5/6$, and $\delta_3=(\sqrt{ c/d}-1)/2<0$. Then,  if $\delta\in (-1/2,\delta_3]$, the following inequality holds for all  $x\in(0,1)$,
\begin{align*}
F(1/2,1/2;1;1-x^c)+C_3(\delta)(1-x^c)&<F(-1/2-\delta,1/2+\delta;1;1-x^d)\nonumber\\
&<F(-1/2,1/2;1;1-x^c)+C_4(\delta)(1-x^c).
\end{align*}
where
\begin{align*}
&C_5(\delta)=-\frac dc\left(\delta^2+\delta+\frac14(1-\frac cd)\right)\geq 0 \\
&C_6(\delta)=\frac{1}{B(1/2-\delta,3/2+\delta)}-\frac{2}{\pi}=\frac 2\pi\left[\frac{\cos(\pi\delta)}{1+2\delta}-1\right].
\end{align*}

(2) If  $0< c/d\leq5/6$, then
 \begin{align*}
\sup&\{\delta\in(-1/2,0)|=F(1/2,1/2;1;1-x^c)<F(-1/2-\delta,1/2+\delta;1;1-x^d), \\
&\hbox{for all}\  x\in (0,1)\}=(\sqrt{ c/d}-1)/2.
\end{align*}

(II) Let $c=2,d=3$ and  ${a}_0 $ be the minimum root of $4a(2-a)(1-a)^2(a^2-2a+2)^2=1$. For $a\in ({a}_0,1]$, and $\delta_4=(\sqrt{6}/3-1)(1-a)<0$, we have that:

 (1) If $\delta\in (a-1,\delta_4]$, the following inequality hold for all  $x\in(0,1)$,
 \begin{align*}
F(a-1,1-a;1;1-x^2)+C_7(\delta)(1-x^c)&<F(a-1-\delta,1-a+\delta;1;1-x^3)\nonumber\\
&<F(a-1,1-a;1;1-x^2)+C_8(\delta)(1-x^c).
\end{align*}
where
\begin{align*}
&C_7(\delta)=-\frac 32\left(\delta^2+2(1-a)\delta+\frac{(1-a)^2}{3}\right)\geq 0 \\
&C_8(\delta)=\frac{1}{B(a-\delta,2-a+\delta)}-\frac{1}{B(a,2-a)}=\frac 1\pi\left[\frac{\sin(\pi(a-\delta))}{1-a+\delta}-\frac{\sin(\pi a)}{1-a}\right].
\end{align*}
(2) If  $a\in [{a}_0,1)$, then
 \begin{align*}
\sup&\{\delta\in(-1/2,0)|=F(a-1,1-a;1;1-x^2)<F(a-1-\delta,1-a+\delta;1;1-x^3), \\
&\hbox{for all}\  x\in (0,1)\}=(\sqrt{6}/3-1)(1-a).
\end{align*}

\end{remark}

\clearpage

\end{document}